    \DeclareMathAlphabet{\mathpzc}{OT1}{pzc}{m}{it}
    \newcounter{EQNR}[NNN]
    \DeclareMathOperator{\disc}{disc}
    \DeclareMathOperator{\logc}{log-cond}
    \DeclareMathOperator{\supp}{supp}
    \DeclareMathOperator{\rad}{rad}
\newtheorem{thm}{Theorem}[section]
\newtheorem{prop}[thm]{Proposition}
\newtheorem{lemma}[thm]{Lemma}
\newtheorem{cor}[thm]{Corollary}
\newtheorem{conj}[thm]{Conjecture}
\theoremstyle{remark}
\newtheorem{rk}[thm]{Remark}
\renewenvironment{proof}{\par\pagebreak[2]\noindent{\it Proof: }}{
 \hfill $\Box$ \medskip}
\newcommand\Q{\mathbb{Q}}
\newcommand\F{\mathbb{F}}
\newcommand\Z{\mathbb{Z}}
\newcommand\R{\mathbb{R}}
\newcommand\X{\mathcal{X}}
\newcommand\Y{\mathcal{Y}}
\newcommand\Spec{\mathop{\rm Spec}\nolimits}
\renewcommand\O{\mathcal{O}}
\newcommand{\Div}{\operatorname{Div}}
\newcommand{\D}{\mathcal{D}}
\newcommand{\cP}{\mathcal{P}}
\begin{document}
\title[A height inequality implied by abc]{A height inequality for rational points on
elliptic curves implied by the abc-conjecture}

\author{Ulf K\"uhn, J. Steffen M\"uller}
\address{Fachbereich Mathematik\\Universit\"at Hamburg\\
  Bundesstrasse 55\\D-20146 Hamburg}
\email{kuehn@math.uni-hamburg.de}
\email{jan.steffen.mueller@math.uni-hamburg.de}

\date{ \today} 

\maketitle

\begin{abstract}
In this short note we show that the uniform $abc$-conjecture over number fields puts strong restrictions on the
 coordinates of rational points on elliptic curves.
For the proof we use a variant of the uniform $abc$-conjecture over number fields formulated by Mochizuki.
As an application, we generalize a result of Silverman on elliptic non-Wieferich primes.
\end{abstract}
\section{Introduction}
If $E/\Q$ is an elliptic curve in Weierstra\ss{} form and $P\in E(\Q)\setminus\{O\}$, where $O$ is the point
at infinity, then it is well known that we can write 
\[
P=\left(\frac{a_P}{d_P^2},\frac{b_P}{d_P^3}\right),
\] 
where $a_P,\,b_P,\,d_P\in\Z$ satisfy $\gcd(d_P,a_Pb_P)=1$.

The structure of the denominators $d_P$ has been studied, for instance, by
Everest-Reynolds-Stevens \cite{everest-reynolds-stevens}, and has recently received
increasing
attention in the context of elliptic divisibility sequences first studied by Ward
\cite{ward}, see for instance \cite{einsiedler-everest-ward:primes} or \cite{reynolds:perfect} 
and the references therein.
In this paper we make the following conjecture, where $\rad(n)$ denotes the product of
distinct prime divisors of an integer $n$.
\begin{conj}\label{conj:denom}
Let $E/\Q$ be an elliptic curve in Weierstra\ss{} form.
For all $\epsilon>0$ there exists a constant $c_\epsilon$ such that 
\[
\max\left\{\frac{1}{2}\log |a_P|,\log |d_P|\right\}\le(1+\epsilon) \log \rad (d_P) + c_\epsilon
\]
for all $P\in E(\Q)\setminus\{O\}$.
\end{conj}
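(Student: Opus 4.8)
The plan is to recognize the asserted bound as the special case, for $\Q$-rational points, of Vojta's height conjecture applied to the once-punctured elliptic curve $E\setminus\{O\}$, and then to extract that from Mochizuki's variant of the uniform $abc$-conjecture over number fields.

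The first step is an elementary normalization. Writing $h$ for the standard logarithmic Weil height on $\BP^1$, the coprimality $\gcd(d_P,a_P)=1$ forces $h(x_P)=\log\max\{|a_P|,d_P^2\}$, so the left-hand side of Conjecture~\ref{conj:denom} equals $\tfrac12 h(x_P)$. Since $2(O)$ induces the degree-$2$ morphism $x\colon E\to\BP^1$, this is $h_{(O)}(P)+O(1)$, where $h_{(O)}$ is a height attached to the divisor $(O)$. On the other hand, a prime $p$ divides $d_P$ precisely when $P$ reduces to $O$ modulo $p$ — for good $p$ this is immediate, and the finitely many bad $p$ contribute $O(1)$ — so $\log\rad(d_P)=N^{(1)}_{(O)}(P)+O(1)$, the truncated counting function of the section $O$. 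Hence Conjecture~\ref{conj:denom} is equivalent to
\[
h_{(O)}(P)\ \le\ (1+\epsilon)\,N^{(1)}_{(O)}(P)+c_\epsilon\qquad\text{for all }P\in E(\Q)\setminus\{O\},
\]
which is exactly Vojta's inequality $h_{K_E+(O)}(P)\le(1+\epsilon)\bigl(N^{(1)}_{(O)}(P)+d_\Q(P)\bigr)+c_\epsilon$ for $E\setminus\{O\}$, specialized to $K_E=0$ and to rational points, for which the logarithmic discriminant $d_\Q(P)$ vanishes.

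To deduce this from an $abc$-type input, I would follow the Belyi-map method of Elkies and van Frankenhuysen. Fix a finite morphism $\beta\colon E\to\BP^1$ ramified only over $\{0,1,\infty\}$, with $O$ lying in $\beta^{-1}(\{0,1,\infty\})$; such $\beta$ exists by Belyi's theorem and can be written down by composing the $x$-coordinate map with a rational function that carries the four branch points $e_1,e_2,e_3,\infty$ of $x$ into $\{0,1,\infty\}$ and then applying Belyi. Let $F$ be a number field containing the coefficients of $\beta$ and the $2$-torsion of $E$; crucially $F$ depends only on $E$, not on $P$, so $[F:\Q]$ and $\disc(F)$ are bounded. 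Applying Mochizuki's uniform $abc$-inequality over number fields to $\beta(P)\in\BP^1(\Q)\subset\BP^1(F)$ bounds $h(\beta(P))$ by the truncated conductor of $\beta(P)$ relative to $\{0,1,\infty\}$ together with a bounded $\log$-discriminant term; by functoriality of heights, $h(\beta(P))=\deg(\beta)\cdot h_{(O)}(P)+O\bigl(\sqrt{h_{(O)}(P)}\bigr)$.

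The crux, and the step I expect to be hardest, is matching the conductor side. The truncated conductor of $\beta(P)$ relative to $\{0,1,\infty\}$ counts primes at which $P$ reduces to \emph{any} point of $\beta^{-1}(\{0,1,\infty\})$, and these preimages are $O$ together with a fixed finite set $R_1,\dots,R_k$ of other points; thus one gets $\log\rad(d_P)+\sum_j N^{(1)}_{R_j}(P)+O(1)$ rather than $\log\rad(d_P)+O(1)$. The extra terms are genuinely unbounded and must be absorbed. The standard device: a prime $v$ contributing to $N^{(1)}_{R_j}(P)$ forces $P$ to be $v$-adically close to $R_j$, which is defined over a number field $F_j$ of bounded degree; after base change to $F_j$ the contribution of such $v$ is swallowed by the $\log$-different term supplied by Mochizuki's formulation, the bookkeeping being possible precisely because that formulation is uniform over number fields of bounded degree. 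Carrying out this absorption while keeping all fields of degree bounded in terms of $E$ alone, and checking that the cumulative error does not degrade the coefficient $1+\epsilon$, is the technical heart. (If the version of Mochizuki's conjecture one invokes is already phrased for appropriately level-structured elliptic curves, or directly for once-punctured elliptic curves, this entire last step collapses to a citation, and the proof is just the elementary first step.)
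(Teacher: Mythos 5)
Your elementary first step --- identifying $\max\{\tfrac12\log|a_P|,\log|d_P|\}$ with a height $h_{(O)}(P)+O(1)$ and $\log\rad(d_P)$ with the truncated counting (log-conductor) function of the section $(O)$ on the Weierstrass model over $\Spec(\Z)$ --- is exactly the paper's computation in Section~3. The paper then takes the short route you allow for in your closing parenthesis: Mochizuki's conjecture \cite[\S2]{mochizuki:elliptic_general} is formulated for an arbitrary smooth proper geometrically connected curve $X$ with a reduced effective divisor $D$ such that $\omega_X(D)$ is ample, so it already covers $X=E$, $D=(O)$, $d=1$ directly; Mochizuki's Theorem~2.1 in the same reference shows that this conjecture follows from the uniform $abc$-conjecture over number fields, and the whole argument collapses to a citation plus the normalization you carried out.

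Your primary route, re-deriving the Vojta-type inequality from $abc$ via a Belyi map in the spirit of Elkies and van Frankenhuysen, has a genuine gap at exactly the step you flag as hardest. The extra terms $N^{(1)}_{R_j}(P)$ coming from the non-$O$ preimages of $\{0,1,\infty\}$ under a general Belyi map cannot be absorbed into the log-discriminant term: in the relevant formulation that term is $\log\disc(k(P))$, the discriminant of the \emph{minimal field of definition of $P$}, which is $0$ for $P\in E(\Q)$ and remains $0$ regardless of any auxiliary base change to a field $F_j$ depending only on $E$ --- the latter contributes at most a bounded constant, while the $N^{(1)}_{R_j}(P)$ grow on the scale of the main term. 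This is not bookkeeping; it is precisely the obstruction that keeps the classical Elkies argument from producing a Vojta-type inequality with truncated counting functions, as opposed to mere Mordell-type finiteness. The missing idea --- internal to Mochizuki's proof of his Theorem~2.1 --- is to use \emph{noncritical} Belyi maps, chosen so that the parasitic ramification is steered away from the loci whose truncated counting one must control. Without that device (or without simply citing Mochizuki's result, as the paper does), the absorption you describe does not go through.
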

\begin{rk}\label{rk:siegel}
A strong form of Siegel's Theorem implies the weaker inequality
\[
 \frac{1}{2}\log|a_P| \le (1+\epsilon)\log|d_P|+\O(1),
\]
see for example \cite[Example~IX.3.3]{silverman:elliptic1}. 
\end{rk}
Our main theorem relates Conjecture~\ref{conj:denom}
to the uniform $abc$-conjecture over number fields.

\begin{thm}\label{thm:abc}
Conjecture~\ref{conj:denom} follows from the uniform $abc$-conjecture over number fields.
\end{thm}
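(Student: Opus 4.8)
The plan is to reduce Conjecture~\ref{conj:denom} to a single height inequality for $P$, recognise that inequality as a Vojta-type bound for $E$ relative to the divisor $(O)$, and then invoke Mochizuki's variant of the uniform $abc$-conjecture over number fields after normalising $(E,P)$ appropriately.

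First I would dispose of some inessential choices. Changing the Weierstra\ss{} model alters $a_P,b_P,d_P$ only by bounded amounts, so we may fix an integral short model $y^2 = x^3+Ax+B$. Because $\gcd(a_P,d_P)=1$ one has $h(x(P)) = \log\max\{|a_P|,d_P^2\}$, hence
\[
\max\Bigl\{\tfrac12\log|a_P|,\log|d_P|\Bigr\} = \tfrac12 h(x(P)) = \hat h(P)+O(1),
\]
where $\hat h$ denotes the canonical height. By Remark~\ref{rk:siegel} it therefore suffices to prove $\log|d_P|\le(1+\epsilon)\log\rad(d_P)+c_\epsilon$, and since $\log|d_P|\ge\log\rad(d_P)$ always, the real content is a bound on the non-squarefree part $\sum_{p\mid d_P}(\ord_p(d_P)-1)\log p$ of $d_P$.

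The key observation is that the primes dividing $d_P$ are exactly the primes at which $P$ reduces to $O$, so that $\log\rad(d_P)$ equals, up to $O(1)$, the truncated counting function of $P$ against the divisor $(O)$; the inequality to be proved is thus a Vojta-type bound $\hat h(P)\le(1+\epsilon)N^{(1)}_{(O)}(P)+O_\epsilon(1)$ for $E$ relative to the single point $(O)$. What makes the small divisor $(O)$ — rather than some larger one — the correct object is the coprimality $\gcd(a_Pb_P,d_P)=1$: together with the Weierstra\ss{} identity $b_P^2 = (a_P-e_1d_P^2)(a_P-e_2d_P^2)(a_P-e_3d_P^2)$ over the (bounded-degree) splitting field $K$ of $x^3+Ax+B$, it shows that every prime of $K$ above a divisor of $d_P$ is a unit at each factor $a_P-e_id_P^2$, so the ramified primes of the relevant auxiliary data split cleanly into the $d_P$-part and a fixed part dividing $\disc(x^3+Ax+B)$.

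Finally I would put $(E,P)$ over $K$ into the shape to which Mochizuki's variant of the uniform $abc$-conjecture over number fields of bounded degree applies (fixing the level-$2$ structure given by the $e_i$ and passing to the associated model) and read off $\log|d_P|\le(1+\epsilon)\log\rad(d_P)+c_\epsilon$; since $[K:\Q]$ and $\disc(K)$ depend only on $E$, the discriminant term in that conjecture is absorbed into $c_\epsilon$. The delicate point — which I expect to be the main obstacle — is precisely this last passage: one must guarantee that the conductor entering Mochizuki's inequality involves $\rad(d_P)$ and nothing more (this is exactly where the coprimality of $d_P$ with $a_Pb_P$ is used, to keep the generically enormous radicals of $a_P$ and $b_P$ out) and that the constant obtained is the sharp $1+\epsilon$. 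A cruder route through the $abc$-conjecture over $\Q$ via Belyi or Frey curves would either reintroduce those unwanted radicals or worsen the exponent, so it is the particular form of Mochizuki's conjecture that is essential here.
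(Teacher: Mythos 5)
Your high-level diagnosis is on target: $\max\{\tfrac12\log|a_P|,\log|d_P|\}$ is a Weil height on $E$ with respect to $(O)$, and $\log\rad(d_P)$ is the truncated counting function of $P$ against $(O)$, so the statement is a Vojta-type inequality for $(E,(O))$ over $\Q$. That is exactly the paper's starting point. But from there your argument takes an unnecessary detour and, more importantly, stops short at the step that actually has to be carried out.

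The paper records Mochizuki's form of the uniform $abc$-conjecture as Conjecture~\ref{conj:mochizuki}: for \emph{any} smooth proper geometrically connected curve $X/K$, effective reduced divisor $D$, proper normal model $\X$, horizontal extension $\D$, and degree bound $d$, one has $h_{\omega_X(D)}(P)\le(1+\epsilon)\bigl(\log\disc(k(P))+\logc_{\X,\D}(P)\bigr)+c$. One simply specializes this to $K=\Q$, $X=E$, $d=1$, $D=(O)$, with $\X$ the Weierstra\ss{} model over $\Z$ and $\D$ the Zariski closure of $(O)$. Then $\log\disc(\Q)=0$; since $\omega_E$ is trivial, $h_{\omega_E(D)}=h_{D}+\O(1)=\max\{\tfrac12\log|a_P|,\log|d_P|\}+\O(1)$ (your $\tfrac12 h(x(P))=\hat h(P)+\O(1)$); and — the step you call ``the main obstacle'' and leave unresolved — $\logc_{\X,\D}(P)=\log\rad(d_P)$, because by the coprimality $\gcd(a_Pb_P,d_P)=1$ a prime $p$ lies in the support of $\cP^*\D$ if and only if $p\mid d_P$. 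That one line is the whole content of the conductor computation; there is no delicacy once one uses the model $\X$ coming from the given Weierstra\ss{} equation. Your passage to the splitting field $K$ of $x^3+Ax+B$, the factorization $b_P^2=\prod_i(a_P-e_id_P^2)$, and the level-$2$ structure are all artifacts of trying to force the problem into the shape of the $abc$-conjecture for $\BP^1\setminus\{0,1,\infty\}$ (a Frey/Belyi route, which you correctly suspect would degrade the exponent); none of it is needed once you use Mochizuki's Vojta-type formulation directly on $E$. Also note that the preliminary reduction via Remark~\ref{rk:siegel} to the weaker bound $\log|d_P|\le(1+\epsilon)\log\rad(d_P)+c_\epsilon$, while valid, is superfluous — the specialization of Conjecture~\ref{conj:mochizuki} yields the $\max$ on the left-hand side directly.
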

\begin{rk}
It is straightforward to generalize both Conjecture~\ref{conj:denom} and
Theorem~\ref{thm:abc} to arbitrary number fields.
For ease of notation we restrict to the rational case here.
\end{rk}
\begin{rk}
Mochizuki \cite{mochizuki:iut4} has recently announced a proof of the uniform $abc$-conjecture over
number fields.
\end{rk}

We now list some consequences of Conjecture~\ref{conj:denom}.
\begin{prop}\label{prop:sqfree}
Suppose that Conjecture~\ref{conj:denom} holds and let $E/\Q$ be an elliptic curve in 
Weierstra\ss{} form.
Then the set of all $P\in E(\Q)\setminus\{O\}$ such that the squarefree part of $d_P$ is bounded is
finite.
\end{prop}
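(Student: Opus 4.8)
The plan is to show that any point $P$ appearing in this set has bounded naive height and then apply Northcott's theorem. Fix a bound $B$ and suppose $d_P$ has squarefree part $t_P \le B$, so that $d_P = t_P n_P^2$ with $t_P$ squarefree and $n_P = \sqrt{d_P/t_P} \le \sqrt{|d_P|}$. Since $\rad(d_P) = \rad(t_P n_P)$ divides $t_P n_P$, this gives the elementary bound
\[
\rad(d_P) \le t_P n_P \le \sqrt{t_P}\,\sqrt{|d_P|} \le \sqrt{B}\,\sqrt{|d_P|},
\qquad\text{i.e.}\qquad
\log\rad(d_P) \le \tfrac12\log|d_P| + \tfrac12\log B .
\]

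Next I would insert this into the inequality of Conjecture~\ref{conj:denom} with a fixed $\epsilon < 1$, say $\epsilon = \tfrac12$: there is a constant $c$, depending only on $E$, with $\log|d_P| \le \tfrac32\log\rad(d_P) + c$ and $\tfrac12\log|a_P| \le \tfrac32\log\rad(d_P) + c$. Combining the first of these with the displayed bound yields $\tfrac14\log|d_P| \le \tfrac34\log B + c$, so $|d_P|$ is bounded in terms of $B$ and $E$ alone; feeding this back, $\tfrac12\log|a_P| \le \tfrac34\log|d_P| + \tfrac34\log B + c$ is bounded as well. Since $\gcd(a_P,d_P)=1$, the naive height of the $x$-coordinate of $P$ equals $\max\{|a_P|,\,d_P^2\}$, which is now bounded independently of $P$, so Northcott's theorem leaves only finitely many possibilities for $x(P)$ and hence for $P$.

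I expect no serious obstacle here: the one point requiring a moment's thought is the passage from ``squarefree part bounded'' to a bound on $\rad(d_P)$ in terms of $\sqrt{|d_P|}$, and everything else is a linear rearrangement of Conjecture~\ref{conj:denom}, made possible by choosing $\epsilon$ strictly less than $1$ so that the coefficient $1-\tfrac{1+\epsilon}{2}$ of $\log|d_P|$ remains positive.
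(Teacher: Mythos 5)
Your proof is correct, but it takes a different route from the paper's. The paper decomposes $d_P = d'_P \prod_i p_i^{t_i}$ (squarefree part times powerful part) and rearranges the conjectured inequality into $\sum_i (t_i - 1 - \epsilon)\log p_i \le \epsilon\log d'_P + c_\epsilon$, from which bounded $d'_P$ forces boundedness of the powerful part term by term, hence of $d_P$; then it invokes Remark~\ref{rk:siegel} to bound the height. You instead observe the clean \emph{a priori} estimate $\rad(d_P) \le \sqrt{B}\,|d_P|^{1/2}$ when the squarefree part is at most $B$, feed this back into Conjecture~\ref{conj:denom} with a fixed $\epsilon<1$, and let the resulting self-improving inequality bound $|d_P|$ and then $|a_P|$ directly, with no need for Siegel/Remark~\ref{rk:siegel}. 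Your version is arguably tidier and makes transparent why the hypothesis $\epsilon<1$ is what matters. One small point of interpretation: you take ``squarefree part'' to mean the unique squarefree $t_P$ with $d_P = t_P n_P^2$, whereas the paper's proof (and the later use in Section~\ref{sec:wieferich}) takes it to mean $d'_P$, the product of primes dividing $d_P$ to exponent exactly one, which is the complement of the powerful part; since $d'_P \mid t_P$, the paper's reading is the stronger statement. Your key estimate survives the change unscathed: writing $d_P = d'_P v_P$ with $v_P$ powerful and coprime to $d'_P$, one has $\rad(v_P)^2 \mid v_P$, so $\rad(d_P) = d'_P\,\rad(v_P) \le d'_P\sqrt{v_P} = \sqrt{d'_P\,d_P} \le \sqrt{B}\,|d_P|^{1/2}$, and the rest of your argument goes through verbatim.
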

If the bound on the squarefree part of $d_P$ in Proposition~\ref{conj:denom} is~1, then we
get a conditional proof of Siegel's Theorem that there are only finitely many integral
points on $E$, and we can also deduce:
\begin{cor}\label{cor:powers}
Suppose that Conjecture~\ref{conj:denom} holds and let $E/\Q$ be an elliptic curve in 
Weierstra\ss{} form.
Then the set of all $P\in E(\Q)\setminus\{O\}$ such that $d_P$ is a perfect power 
is finite.
\end{cor}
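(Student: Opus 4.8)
The plan is to deduce the corollary directly from Conjecture~\ref{conj:denom}, exploiting the elementary fact that the radical of a perfect power is much smaller than the power itself. Fix $P\in E(\Q)\setminus\{O\}$ with $d_P=m^k$ for some integers $m\ge 1$ and $k\ge 2$. Then every prime dividing $d_P$ occurs to a power at least $k\ge 2$, so $|d_P|\ge\rad(d_P)^2$, that is, $\log\rad(d_P)\le\tfrac12\log|d_P|$.

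Next I would substitute this into Conjecture~\ref{conj:denom}, applied with $\epsilon=\tfrac12$: the conjectural inequality then reads $\log|d_P|\le\tfrac32\log\rad(d_P)+c_{1/2}\le\tfrac34\log|d_P|+c_{1/2}$, whence $\log|d_P|\le 4c_{1/2}$. Thus $|d_P|$, and therefore also $\rad(d_P)$, is bounded by a constant depending only on $E$. Feeding this bound back into Conjecture~\ref{conj:denom} once more (or, alternatively, using Remark~\ref{rk:siegel}) bounds $\tfrac12\log|a_P|\le\tfrac32\log\rad(d_P)+c_{1/2}$, so $|a_P|$ is bounded as well. Since $\gcd(d_P,a_Pb_P)=1$, the $x$-coordinate $x(P)=a_P/d_P^2$ is a rational number in lowest terms of bounded height, so it takes only finitely many values; as $P\mapsto x(P)$ is at most two-to-one on $E(\Q)\setminus\{O\}$ (equivalently, by Northcott's theorem applied to $E$), the set of such $P$ is finite.

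I do not expect a genuine obstacle here: the proof is a short manipulation of the conjectural inequality combined with the Northcott property of heights. The one point requiring a little care is that bounding the denominator $d_P$ alone does not bound the point $P$; one must also bound the numerator $|a_P|$, which is why Conjecture~\ref{conj:denom} (or Remark~\ref{rk:siegel}) has to be invoked a second time. It is perhaps worth noting that the sub-case in which $d_P$ is a perfect square already follows from Proposition~\ref{prop:sqfree} with squarefree part~$1$; the argument sketched above has the advantage of treating all exponents $k\ge 2$ uniformly.
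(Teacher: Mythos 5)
Your argument is correct, but it takes a different route from the paper. The paper obtains Corollary~\ref{cor:powers} as an immediate instance of Proposition~\ref{prop:sqfree}: if $d_P=m^k$ with $k\ge 2$, then every prime dividing $d_P$ does so to exponent at least~$2$, so the squarefree part $d'_P$ of $d_P$ (as defined in the proof of Proposition~\ref{prop:sqfree}, the product of primes dividing $d_P$ exactly once) is equal to~$1$, and the conclusion follows by applying that proposition with the bound~$1$. You instead re-derive finiteness directly from Conjecture~\ref{conj:denom}, via the inequality $\log\rad(d_P)\le\tfrac12\log|d_P|$ for perfect powers and a bootstrapping step to bound $|a_P|$; this is essentially the same mechanism that drives the proof of Proposition~\ref{prop:sqfree}, specialized to the perfect-power case, and both arguments close by invoking Remark~\ref{rk:siegel} (or the conjecture once more) together with finiteness of bounded-height points. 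Your route is therefore a self-contained alternative; the paper's is shorter because the work has already been done in Proposition~\ref{prop:sqfree}.

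One small correction: your closing remark that only the perfect-square sub-case follows from Proposition~\ref{prop:sqfree} with squarefree part~$1$ is mistaken. Any perfect $k$th power with $k\ge 2$ is a powerful number and hence already has squarefree part~$1$; so the entire corollary, not merely the case $k=2$, is an immediate consequence of Proposition~\ref{prop:sqfree}. Your direct argument is correct but does not cover a strictly larger class of exponents than the paper's.
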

\begin{rk}
It is shown in \cite[Theorem~1.1]{everest-reynolds-stevens} that for a fixed exponent $n>1$, there are
only finitely many $P\in E(\Q)\setminus\{O\}$ such that $d_P$ is an $n$th power.
According to \cite[Remark~1.2]{everest-reynolds-stevens}, the uniform $abc$-conjecture over 
number fields  implies that for $n\gg0$, there are no $P\in E(\Q)\setminus\{O\}$ such that
$d_P$ is an $n$th power. 
Together, these results also imply
that the finiteness of the set of $P\in E(\Q)\setminus\{O\}$ such that $d_P$ is a perfect power
is a consequence of the uniform $abc$-conjecture over number fields.
However, a direct proof of the assertion from \cite[Remark~1.2]{everest-reynolds-stevens} 
has not been published and, according to Reynolds \cite{reynolds:personal}, is rather complicated.
\end{rk}

Another application of Conjecture~\ref{conj:denom} concerns {\em elliptic non-Wieferich primes}.
For a prime $p$, we define $N_p:=\#E(\F_p)$. 
If $P\in E(\Q)$ is non-torsion, let
\[
    W_{E,P}:=\left\{p\;\;\mathrm{prime}:N_pP\not\equiv O\pmod{p^2}\right\}
\]
be the set of elliptic non-Wieferich primes to base $P$.
The following result is due to Silverman. 
\begin{thm}\label{thm:silverman}(Silverman, \cite[Theorem~2]{silverman:wieferich})
Assume that the $abc$-conjecture (over $\Q$) holds.
If an elliptic curve $E/\Q$ has $j$-invariant equal to~0 or~1728 and if $P\in E(\Q)$ is non-torsion, then
\begin{equation}\label{eq:wieferich}
    \left|\{p\in W_{E,P}:p\le X\}\right|\ge \sqrt{\log(X)}+\O_{E,P}(1)\qquad\mathrm{as}\quad X\to\infty.
\end{equation}
\end{thm}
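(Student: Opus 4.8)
The plan is to translate the defining property of $W_{E,P}$ into a statement about the sequence of denominators $d_n:=d_{nP}$ of the multiples of $P$, which is an elliptic divisibility sequence, and then to play this off against the $abc$-conjecture over $\Q$. Let $\ell_p$ be the rank of apparition of $p$, that is, the least $n\ge1$ with $p\mid d_n$. By the theory of the formal group and of reduction mod $p$ (see e.g.\ \cite{silverman:elliptic1}), $\ell_p$ equals the order of $\bar P$ in $E(\F_p)$, one has $p\mid d_n\iff\ell_p\mid n$, and $\ord_p(d_n)=\ord_p(d_{\ell_p})+\ord_p(n/\ell_p)$ whenever $\ell_p\mid n$, for all $p\ge5$ of good reduction. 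Since $\ell_p\mid N_p$, always $p\mid d_{N_p}$, and $\ord_p(d_{N_p})=\ord_p(d_{\ell_p})+\ord_p(N_p/\ell_p)$, where the Hasse bound forces $\ord_p(N_p/\ell_p)=0$ apart from finitely many $p$ (those with $p\mid d_1$, with bad reduction, or with $p<7$). Hence, up to finitely many exceptions, $p\in W_{E,P}$ if and only if $\ord_p(d_{\ell_p})=1$; so it suffices to produce, for each large $X$, at least $\sqrt{\log X}+O_{E,P}(1)$ primes $p\le X$ with $\ord_p(d_{\ell_p})=1$.

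The analytic input is that $abc$ forces $d_n$ to be almost squarefree, while the height machinery forces $d_n$ to grow quadratically. Writing $nP=(a_n/d_n^2,b_n/d_n^3)$ on $y^2=x^3+Ax+B$, there is a constant $\lambda=\lambda_{E,P}>0$ with $\log\max(|a_n|,d_n^2)=\lambda n^2+O(1)$, so $\log d_n\le\tfrac{\lambda}{2}n^2+O(1)$; combined with the strong form of Siegel's theorem from Remark~\ref{rk:siegel}, which gives $\log|a_n|\le(2+\epsilon)\log d_n+O(1)$, one also gets $\log d_n\ge\tfrac{\lambda}{2+\epsilon}n^2-O(1)$. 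Now the hypothesis $j(E)\in\{0,1728\}$ enters: there the cubic factors, so the point relation becomes a genuine three-term coprime $abc$-relation, namely $b_n^2=a_n^3+Bd_n^6$ when $j=0$, and, after splitting the essentially coprime factorisation $b_n^2=a_n(a_n^2+Ad_n^4)$ and writing $a_n$ up to a bounded factor as a square, $\alpha u^4+\beta d_n^4=v^2$ with bounded $\alpha,\beta$ when $j=1728$. Applying $abc$ over $\Q$ to these relations and using the Siegel bound to control the auxiliary radicals yields $\rad(d_n)\ge c_\epsilon\,d_n^{1-\epsilon}$ for all $n$, equivalently $\log\bigl(d_n/\rad(d_n)\bigr)\le\epsilon\log d_n+O(1)$. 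Decomposing $d_n=\prod_{k\mid n}\Phi_k$ into primitive parts (Ward \cite{ward}, see also \cite{everest-reynolds-stevens}), the elementary bound $\sum_{k\mid n,\,k<n}k^2\le(\pi^2/6-1)n^2$ and $\log d_k\le\tfrac{\lambda}{2}k^2+O(1)$ give $\log\Phi_n\ge\log d_n-\sum_{k\mid n,\,k<n}\log d_k\ge c_0 n^2-O(n^{\epsilon})$ for some $c_0>0$, and $\Phi_n/\rad(\Phi_n)\le d_n/\rad(d_n)$.

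The harvesting step produces one prime of $W_{E,P}$ at each level $n$. All but finitely many prime divisors of $\Phi_n$ have $\ell_p=n$, so $\sum_{\ell_p=n}\log p\ge\log\rad(\Phi_n)-O(\log^2 n)\ge\log\Phi_n-\log\bigl(d_n/\rad(d_n)\bigr)-O(\log^2 n)\ge\bigl(c_0-\tfrac{\lambda}{2}\epsilon\bigr)n^2-O(n^{\epsilon})$. If $p$ has $\ell_p=n$ and $p\notin W_{E,P}$, then $p^2\mid d_{\ell_p}=d_n$, so $\prod_{\ell_p=n,\,p\notin W_{E,P}}p$ divides $d_n/\rad(d_n)$ and $\sum_{\ell_p=n,\,p\notin W_{E,P}}\log p\le\epsilon\log d_n+O(1)\le\tfrac{\lambda}{2}\epsilon\,n^2+O(1)$. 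Subtracting, $\sum_{\ell_p=n,\,p\in W_{E,P}}\log p\ge(c_0-\lambda\epsilon)n^2-O(n^{\epsilon})$, which is positive once $n$ is large and $\epsilon<c_0/\lambda$; hence there is a prime $p_n\in W_{E,P}$ with $\ell_{p_n}=n$, and $p_n\mid\Phi_n\mid d_n$ forces $p_n\le d_n\le\exp\bigl(\tfrac{\lambda}{2}n^2+O(1)\bigr)$. The $p_n$ are distinct, since $\ell_{p_n}=n$; so for $n=1,\dots,L$ we get at least $L-O_{E,P}(1)$ primes of $W_{E,P}$ below $X:=\exp\bigl(\tfrac{\lambda}{2}L^2+C\bigr)$, and solving for $L$ gives $\#\{p\in W_{E,P}:p\le X\}\ge\sqrt{2\log X/\lambda}+O_{E,P}(1)$, which is the inequality \eqref{eq:wieferich} (the exact leading constant depending only on the normalisation of the height and on how sharply one tracks the $\epsilon$'s).

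The step I expect to be the obstacle is the radical bound $\rad(d_n)\ge c_\epsilon d_n^{1-\epsilon}$ from $abc$ over $\Q$: this is the only place the hypothesis $j(E)\in\{0,1728\}$ is used, since precisely then does $x^3+Ax+B$ factor over $\Q$ and convert the point relation into a usable $abc$-triple. For a general elliptic curve this breaks down, and one seems to need the $abc$-conjecture over number fields — that is, exactly the input of Theorem~\ref{thm:abc}; running the same harvesting argument with Conjecture~\ref{conj:denom} in place of the ad hoc bound removes the restriction on $j$. The rest — the finitely many exceptional primes, the primitive-part decomposition of the divisibility sequence, and checking via the Hasse bound that $\ord_p(d_{\ell_p})=1$ really characterises $W_{E,P}$ — is routine.
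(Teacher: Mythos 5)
The paper does not prove Theorem~\ref{thm:silverman}: it is quoted verbatim from Silverman \cite[Theorem~2]{silverman:wieferich}, and the authors only use it as a black box in Section~\ref{sec:wieferich}. So there is no ``paper's own proof'' to compare against; what can be checked is whether your reconstruction matches Silverman's argument and whether it is sound, and on both counts it holds up well. The architecture you describe --- translate $p\in W_{E,P}$ into $\ord_p(d_{\ell_p})=1$ via the formal group and the Hasse bound, use the canonical height to pin $\log d_n\asymp n^2$, use $abc$ over $\Q$ (exploiting the factorisation of the cubic when $j\in\{0,1728\}$) to force $d_n$ nearly squarefree, and then harvest one new prime of $W_{E,P}$ per level $n$ via a primitive-part decomposition --- is Silverman's. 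In particular, your closing observation that the $j$-hypothesis enters only at the ``$d_n$ is nearly squarefree'' step, and that replacing this step by Conjecture~\ref{conj:denom} removes the restriction on $j$, is exactly the pivot the authors exploit in the proof of Proposition~\ref{prop:wieferich}; they cite this as Silverman's Lemma~13.

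Two places deserve more care than the sketch gives them. First, the decomposition $d_n=\prod_{k\mid n}\Phi_k$ is only an approximate identity for elliptic divisibility sequences: the relation $\ord_p(d_n)=\ord_p(d_{\ell_p})+\ord_p(n/\ell_p)$ means the Möbius-inverted $\Phi_n$ and the genuine primitive part differ by a factor whose logarithm is $O((\log n)^2)$; you absorb this into an $O(n^\epsilon)$, which is fine, but the first inequality in your chain $\log\Phi_n\ge\log d_n-\sum_{k\mid n,\,k<n}\log d_k$ should carry that error term explicitly. Second, your final count is $\sqrt{2\log X/\lambda}+O_{E,P}(1)$, while the statement reads $\sqrt{\log X}+O_{E,P}(1)$ with leading constant $1$; these agree only if $\lambda\le 2$, so either the normalisation of the height has to be tracked more carefully (as you note parenthetically), or one should read Silverman's bound as $\gg_{E,P}\sqrt{\log X}$. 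Neither point undermines the method, but a complete writeup would resolve them.
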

This is the analogue of \cite[Theorem~1]{silverman:wieferich}, giving an asymptotic lower bound
(dependent on the $abc$-conjecture over $\Q$) for the number of classical non-Wieferich primes up to
a given bound.
In particular, this proves that the $abc$-conjecture over $\Q$ implies the existence of
infinitely many elliptic non-Wieferich primes to any base $P\in E(\Q)$ if
$j(E)\in\{0,\,1728\}$.
See \cite{voloch:elliptic} for further results concerning elliptic non-Wieferich primes.

If we assume Conjecture~\ref{conj:denom} instead of the $abc$-conjecture over $\Q$, we can
eliminate the condition on the $j$-invariant of $E$.
\begin{prop}\label{prop:wieferich}
    Assume Conjecture~\ref{conj:denom} and let $E/\Q$ be an elliptic curve.
Then~\eqref{eq:wieferich} holds for every non-torsion $P\in E(\Q)$.
\end{prop}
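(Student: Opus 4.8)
The plan is to follow Silverman's proof of Theorem~\ref{thm:silverman}, with Conjecture~\ref{conj:denom} --- which by Theorem~\ref{thm:abc} follows from the uniform $abc$-conjecture over number fields --- taking the place of the $abc$-conjecture over $\Q$ together with the hypothesis $j(E)\in\{0,1728\}$ that Silverman uses. Let $\hat h$ be the canonical height on $E(\Q)$, normalised so that $\hat h(mP)=m^2\hat h(P)$ and $h(x(Q))=2\hat h(Q)+\O(1)$, and let $S$ be the finite set consisting of the primes of bad reduction for the chosen Weierstra\ss{} model together with~$2$. For $p\notin S$ the point $N_pP$ reduces to $O$ modulo $p$, hence lies in the kernel of reduction, and the theory of the formal group shows that $p\in W_{E,P}$ if and only if $v_p(d_{N_pP})=1$. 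Write $m_p$ for the order of the image of $P$ in $E(\F_p)$ --- equivalently, the rank of apparition of $p$ in the elliptic divisibility sequence $(d_{mP})_m$, so that $p\mid d_{mP}\iff m_p\mid m$. If $p\mid N_p/m_p$ then $p(m_p-1)\le N_p-p\le 2\sqrt p+1$, which by the Hasse bound is impossible once $m_p\ge 3$; for such $p$ the formal group gives $v_p(d_{N_pP})=v_p(d_{m_pP})+v_p(N_p/m_p)=v_p(d_{m_pP})$. Hence, for $m_p\ge 3$, one has $p\in W_{E,P}$ precisely when $p$ is a primitive prime divisor of $(d_{mP})_m$ dividing $d_{m_pP}$ to the first power.

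It therefore suffices to produce, for every sufficiently large $m$, a prime $p_m\notin S$ with $m_{p_m}=m$ and $v_{p_m}(d_{mP})=1$; such a $p_m$ lies in $W_{E,P}$, and $m\mapsto p_m$ is injective since $m=m_{p_m}$. Set $d_{mP}^{\,\mathrm{prim}}=\prod_{m_p=m}p^{v_p(d_{mP})}$. By Conjecture~\ref{conj:denom} the non-squarefree part is small: $\sum_{p^2\mid d_{mP}}v_p(d_{mP})\log p\le 2\bigl(\log d_{mP}-\log\rad(d_{mP})\bigr)\le 2\epsilon\log d_{mP}+2c_\epsilon$. The primitive part, on the other hand, is large: from the multiplicativity identity $\log d_{mP}=\sum_{m'\mid m}\log d_{m'P}^{\,\mathrm{prim}}+o(m^2)$ (the error absorbing the corrections $v_p(m/m')$ and the contribution of the finite set $S$) and M\"obius inversion, $\log d_{mP}^{\,\mathrm{prim}}=\sum_{m'\mid m}\mu(m/m')\log d_{m'P}+o(m^2)$, and since $(1-\epsilon)\hat h(P)m'^2-\O_\epsilon(1)\le\log d_{m'P}\le\hat h(P)m'^2+\O(1)$ --- the lower bound being exactly Remark~\ref{rk:siegel} (Siegel's theorem) --- one gets $\log d_{mP}^{\,\mathrm{prim}}\ge\hat h(P)\bigl(J_2(m)-\epsilon\,\sigma_2(m)\bigr)-o(m^2)$, where $J_2(m)=m^2\prod_{p\mid m}(1-p^{-2})\ge 6m^2/\pi^2$ is the Jordan totient and $\sigma_2(m)=\sum_{d\mid m}d^2\le\pi^2 m^2/6$. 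Thus for $\epsilon$ small and $m$ large $\log d_{mP}^{\,\mathrm{prim}}\gg_{E,P}m^2$, which exceeds both the logarithm of the non-squarefree part of $d_{mP}$ and $\sum_{p\in S}\log p$; hence $d_{mP}^{\,\mathrm{prim}}$ must have a prime factor $p_m\notin S$ with $v_{p_m}(d_{mP})=1$.

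Finally, $p_m\mid d_{mP}$ gives $\log p_m\le\log d_{mP}\le\hat h(mP)+\O_{E,P}(1)=m^2\hat h(P)+\O_{E,P}(1)$, so $p_m\le X$ for every $m\le\sqrt{\log X/\hat h(P)}-\O_{E,P}(1)$. As the $p_m$ are distinct elements of $W_{E,P}$, this yields $|\{p\in W_{E,P}:p\le X\}|\gg_{E,P}\sqrt{\log X}$, which is~\eqref{eq:wieferich}. The main obstacle is the middle step: balancing the lower bound for the primitive part of $d_{mP}$ (which ultimately rests on Siegel's theorem) against the upper bound for its non-squarefree part --- it is here, and only here, that Conjecture~\ref{conj:denom} does the work that in Theorem~\ref{thm:silverman} is done by the $abc$-conjecture over $\Q$ together with the restriction $j(E)\in\{0,1728\}$.
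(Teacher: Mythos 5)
Your proof is essentially correct in outline, but it takes a much longer route than the paper does. The paper's proof is a \emph{surgical} argument: it observes that the only place in Silverman's proof of \cite[Theorem~2]{silverman:wieferich} where the hypothesis $j(E)\in\{0,1728\}$ is invoked is \cite[Lemma~13]{silverman:wieferich}, namely the assertion that for every $\epsilon>0$ one has $\log v_{nP}\le\epsilon\log d_{nP}+\O_\epsilon(1)$ (with $v_{nP}$ the non-squarefree part of $d_{nP}$), and it then proves the slightly stronger bound $\log v_Q\le\epsilon\log\rad(d_Q)+\O_\epsilon(1)$ directly from Conjecture~\ref{conj:denom} using the coprime factorisation $d_Q=d'_Q v_Q$ and the observation that $v_Q/\rad(v_Q)\ge\rad(v_Q)$. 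The remainder of Silverman's argument is cited as a black box. You instead re-derive Silverman's entire proof: the formal-group characterisation of $W_{E,P}$, the rank-of-apparition and Hasse-bound bookkeeping, the M\"obius-inversion lower bound for $\log d_{mP}^{\,\mathrm{prim}}$ via $J_2(m)$ and Siegel's theorem, and the final counting step. The one place where Conjecture~\ref{conj:denom} actually enters your argument is the inequality $\sum_{p^2\mid d_{mP}}v_p(d_{mP})\log p\le 2\epsilon\log d_{mP}+2c_\epsilon$ --- this is exactly Silverman's Lemma~13, derived in essentially the same way as the paper derives it. So the two proofs agree where it matters and differ in that you reconstruct material the paper delegates to \cite{silverman:wieferich}. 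A few remarks on the reconstruction: the $o(m^2)$ error in the decomposition $\log d_{mP}=\sum_{m'\mid m}\log d_{m'P}^{\,\mathrm{prim}}+o(m^2)$ and its behaviour under M\"obius inversion are asserted rather than estimated (one must check that the divisor-sum of the per-index errors, which is $\O(\tau(m)\,\omega(m)\log m)$, is still $o(m^2)$ --- true, but worth saying); and the final count you obtain is $\gg_{E,P}\sqrt{\log X}$ with an implied constant depending on $\hat h(P)$, which matches \eqref{eq:wieferich} only up to that constant. Neither issue is fatal, but for a published proof the paper's approach --- isolate the one lemma that uses the $j$-invariant hypothesis and replace it --- is both shorter and safer than re-proving the whole theorem.
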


In Section~\ref{sec:mochizuki} we recall work of Mochizuki from
\cite{mochizuki:elliptic_general}, which we use in Section~\ref{sec:thm} 
for the proof of Theorem~\ref{thm:abc}. 
We prove Proposition~\ref{prop:sqfree} in Section~\ref{sec:prop} and
Proposition~\ref{prop:wieferich} in Section~\ref{sec:wieferich}.
\section*{Acknowledgements}
We thank Joe Silverman for helpful comments and his suggestion that
Proposition~\ref{prop:wieferich} should hold.
We also thank Jonathan Reynolds for helpful discussions.
The second author was supported by DFG-grant KU 2359/2-1.

\section{Mochizuki's height inequality}\label{sec:mochizuki}
In this section, we define Mochizuki's   {\em log-conductor function}
\cite[\S1]{mochizuki:elliptic_general} and state his Conjecture~\ref{conj:mochizuki}.
We assume some familiarity with the basics of Arakelov theory, see for
instance~\cite{soule:hermitian}.

Let $K$ be a number field, let $X$ be a smooth, proper, geometrically connected
curve over $K$ and let $D$ be an effective divisor on $X$.
Extend $X$ to a proper normal model $\X$ which is flat over $\Spec(\O_K)$ and extend $D$
to an effective horizontal divisor
$\D\in\Div(\X)$.
We can define a function $\logc_{\X,\D}$ on $X(\overline{K})$ as follows:
Let $P\in X(\overline{K})$ and let $F$ be a number field containing $P$.
Then $P$ induces a morphism $\cP:\Spec(\O_F)\to\Y$, where $\Y$ is the normalisation of 
$\X\times\Spec(\O_F)$ and we define 
\[
 \logc_{\X,\D}(P):=
\frac{1}{[F:\Q]}\widehat{\deg_F}\left((\cP^*\D)_{\mathrm{red}}\right)\in\R,
\]
where $\widehat{\deg_F}$ is the arithmetic degree of an arithmetic divisor on
$\Spec(\O_F)$.
\begin{rk}\label{rk:mochizuki_abc}
Note that up to a bounded function, $\logc_{\X,\D}$ only depends on $X$ and $D$ (see
\cite[Remark~1.5.1]{mochizuki:elliptic_general}).
\end{rk}
\begin{rk}
Alternatively, we could define the log-conductor function as follows:
Extend $X$ to a proper regular model $\X$ over $\Spec(\O_K)$ and $D$ to an effective
horizontal divisor $\D\in\Div(\X)$.
Let $\pi:\X'\to\X\times\Spec(\O_F)$ be the minimal desingularization and let
$\overline{P}\in\Div(\X')$ be the Zariski closure of $P$.
Then we define 
\[
\logc'_{\X,\D}(P):=\frac{1}{[F:\Q]}\sum_{\mathfrak{p}\in S} \log Nm(\mathfrak{p})\in\R,
\]
where $S$ is the set of finite primes of $F$ such that the intersection multiplicity
$(\overline{P},\pi^*\D)_\mathfrak{p}\ne 0$.
Then it is easy to see that $\logc_{\X,\D}=\logc'_{\X,\D}+\O(1)$.
\end{rk}

The following variation of Vojta's height conjecture is due to Mochizuki:
\begin{conj}\label{conj:mochizuki}(Mochizuki, \cite[\S2]{mochizuki:elliptic_general})
Let $X$ be a smooth, proper, geometrically connected curve over a number field $K$. 
Let $D\subset X$ be an effective reduced divisor, $U_X:=X\setminus\supp(D)$, $d$ a positive integer and
$\omega_X$ the canonical sheaf on $X$.
Fix a proper normal model $\X$ of $X$ which is flat over $\Spec(\O_K)$  and extend
$D$ to an effective horizontal divisor $\D$ on $\X$.
Suppose that $\omega_X(D)$ is ample and let  $h_{\omega_X(D)}$ be a Weil height function on $X$ 
with respect to $\omega_X(D)$.

If $\epsilon>0$, then there exists a constant $c_{\epsilon,d,\X,\D}$ such that
\[
h_{\omega_X(D)}(P)\le (1+\epsilon)\left(\log\disc(k(P))+\logc_{\X,\D}(P)\right)+
c_{\epsilon,d,\X,\D}
\]
for all $P\in X(\overline{K})$ such that $[k(P):\Q]\le d$, where $k(P)$ is the minimal
field of definition of $P$ (as a point over $\overline{\Q}$).
 \end{conj}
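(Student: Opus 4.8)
Conjecture \ref{conj:mochizuki} is Mochizuki's refinement of Vojta's height conjecture for hyperbolic curves; it contains the uniform $abc$-conjecture over number fields, so what follows is the two-step program of \cite{mochizuki:elliptic_general} and \cite{mochizuki:iut4} rather than a short argument. Step~1 reduces the statement for an arbitrary pair $(X,D)$ with $\omega_X(D)$ ample to one model situation; Step~2 establishes that model situation.

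\emph{Step 1 (reduction to the three-point case).} The target is $X=\BP^1_K$, $D=\{0,1,\infty\}$, for which $\omega_{\BP^1}(D)\cong\O_{\BP^1}(1)$ is ample of degree~$1$; rational points on $\BP^1\setminus\{0,1,\infty\}$ correspond to elliptic curves (e.g. the Legendre family), and it is their theta functions that IUT will manipulate. Given an arbitrary $(X,D)$, choose a Belyi-type map $\beta\colon X\to\BP^1$ unramified over $\BP^1\setminus\{0,1,\infty\}$ and, after enlarging $D$ so that $\supp(D)$ contains the ramification locus of $\beta$, compare the three terms of the inequality along $\beta$. By Riemann--Hurwitz, $\omega_X(D)$ is the pullback of $\omega_{\BP^1}(\{0,1,\infty\})$ modified by an effective ramification divisor, so $h_{\omega_X(D)}(P)$ is bounded in terms of $h_{\omega_{\BP^1}(\{0,1,\infty\})}(\beta(P))$ plus a ramification contribution; $\logc_{\X,\D}(P)$ and $\logc(\beta(P))$ are comparable, since ramification of $\beta$ over $\{0,1,\infty\}$ is absorbed into $\supp$ and, by Remark~\ref{rk:mochizuki_abc}, $\logc$ depends on $\D$ only through its support up to $\O(1)$; and the term $\log\disc(k(P))$ transforms under $k(P)\supseteq k(\beta(P))$ by the log-different of that extension, which is again controlled by the ramification of $\beta$. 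Feeding the $\epsilon'$-form of the inequality on $\BP^1$ into these comparisons, and passing to a limit along a tower of Belyi covers so that the ramification losses are absorbed into $\epsilon$, yields the $\epsilon$-form on $X$; this is the computation carried out in \cite[\S2]{mochizuki:elliptic_general}.

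\emph{Step 2 (the elliptic/theta case).} It remains to prove the inequality on $\BP^1\setminus\{0,1,\infty\}$, which amounts to a uniform Szpiro-type bound $\operatorname{ht}(E_\lambda)\le(1+\epsilon)\bigl(\log\disc(k(P))+\log\mathrm{cond}(E_\lambda)\bigr)+c$ for the elliptic curves $E_\lambda$ attached to $\lambda=\beta(P)$, uniformly over points with $[k(P):\Q]\le d$. This is the output of inter-universal Teichm\"uller theory: one attaches to $E_\lambda$ its associated Hodge theaters, applies the $\Theta$-link, and uses the multiradial representation of the theta values at bad primes; the resulting log-volume estimate has precisely the stated shape, with the three IUT indeterminacies producing the factor $(1+\epsilon)$ and the constant $c_{\epsilon,d,\X,\D}$, the dependence on $d$ entering through the degrees of the number fields involved. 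This is \cite{mochizuki:iut4}.

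\emph{Main obstacle and a shortcut.} Step~1 is delicate but essentially formal --- Weil heights, Riemann--Hurwitz, and the arithmetic of finite covers --- its one genuine subtlety being to arrange the Belyi tower so that the ramification divisors degrade $\epsilon$ only by a controlled amount, which the limiting argument handles. All the real weight lies in Step~2: the construction and comparison of Hodge theaters, the multiradial algorithm, and the final log-volume computation. Finally, the application in the present paper needs only the single case $X=\BP^1$, $D=\{0,1,\infty\}$ --- equivalently, the uniform $abc$-conjecture over number fields invoked in Theorem~\ref{thm:abc} --- so there one may bypass Step~1 altogether.
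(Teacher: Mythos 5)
The statement you are trying to prove is a \emph{conjecture} in this paper: the authors state it as Mochizuki's Conjecture and give no proof of it. The only thing the paper uses (Remark~\ref{rk:abc}) is the implication that Conjecture~\ref{conj:mochizuki} \emph{follows from} the uniform $abc$-conjecture over number fields, citing \cite[Theorem~2.1]{mochizuki:elliptic_general}; the paper deliberately keeps uniform $abc$ itself as an unproved hypothesis (note that it only says Mochizuki has ``announced'' a proof, and every result in the paper is stated conditionally). So there is no proof in the paper to compare yours against, and a proof attempt is, strictly speaking, out of place here: at most one could document the reduction to uniform $abc$.

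Judged on its own terms, your proposal is a program outline rather than a proof. Step~2 defers entirely to \cite{mochizuki:iut4}, i.e.\ to IUT; that is precisely the part whose status is unresolved, and invoking it as a black box does not constitute an argument. Step~1 is the part that corresponds to something actually established in the literature (\cite[Theorem~2.1]{mochizuki:elliptic_general}), but your description of it has a genuine gap: the mechanism is not ``passing to a limit along a tower of Belyi covers so that the ramification losses are absorbed into $\epsilon$.'' Each cover in such a tower would contribute its own $\O(1)$ constant and its own degradation of $\epsilon$, and you give no reason these contributions are summable or even controllable. Mochizuki's actual argument uses a single \emph{noncritical} Belyi map --- one chosen, for the finitely many points under consideration (after a compactness argument on bounded-degree points), so that those points avoid the critical locus --- which is exactly what eliminates the ramification loss at the relevant points rather than ``absorbing it in the limit.'' You would also need to be careful with the direction of the comparison of $\log\disc(k(P))$ under $k(P)\supseteq k(\beta(P))$: the discriminant can \emph{grow} when passing to the larger field, and controlling this requires the conductor-discriminant bookkeeping that Mochizuki carries out explicitly, not just ``the log-different is controlled by the ramification of $\beta$.''
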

\begin{rk}\label{rk:abc}
Mochizuki \cite[Theorem~2.1]{mochizuki:elliptic_general} proves that
Conjecture~\ref{conj:mochizuki} follows from the uniform $abc$-conjecture over number
fields.
\end{rk}
\section{Proof of Theorem~\ref{thm:abc}}\label{sec:thm}
\begin{proof}
We specialize Conjecture~\ref{conj:mochizuki} to the case $K=\Q$, $X=E$, $d=1$ and
$D=(O)$.
Let $P\in E(\Q)\setminus\{O\}$; then we have
\[
h_{\omega_E(D)}(P)=h_D(P) = \max\left\{\frac{1}{2}\log |a_P|,\log |d_P|\right\} + \O(1).
\]
In order to compute the log-conductor of $P$ we consider the model $\X$ over $\Spec(\Z)$ determined 
by the given Weierstra\ss{} equation of $E$ and
extend $D$ to $\D\in\Div(\X)$ by taking the Zariski closure.
Then a prime number $p$ lies in the support of $\cP^*\D$ if and only if $p\mid d_P$ and hence
we get
\[
    \logc_{\X,\D}(P)=\log\rad(d_P).
\]
Therefore Conjecture~\ref{conj:mochizuki} implies Conjecture~\ref{conj:denom}.
Using Remark~\ref{rk:abc}, this finishes the proof of the theorem. 
\end{proof}
\section{Proof of Proposition~\ref{prop:sqfree}}\label{sec:prop}
\begin{proof}
Let $0<\epsilon \ll 1$ be small and let $c_\epsilon$ be the corresponding constant from
Conjecture~\ref{conj:denom}.
Suppose that $P\in E(\Q)\setminus\{O\}$ satisfies
\[
    d_P=d'_P\cdot\prod^n_{i=1} p_i^{t_i},
\]
where $d'_P$ is squarefree, $p_1,\ldots,p_n$ are primes and $t_1,\ldots,t_n$ are integers 
such that $t_i>1$ for all $i$.
Then, according to Conjecture~\ref{conj:denom}, we must have
\[
    \sum^n_{i=1} (t_i-1-\epsilon)\log p_i \le c_\epsilon+\epsilon\log d'_P.
\]
This implies that if $d'_P$ is bounded from above, 
then $\sum^n_{i=1} (t_i-1-\epsilon)\log p_i$ is bounded from above as well.
Hence $d_P$ is bounded from above, as is the height of $P$ by Remark~\ref{rk:siegel}.
This proves the corollary, as there are only finitely many $P$ of bounded height.
\end{proof}  

\section{Proof of Proposition~\ref{prop:wieferich}}\label{sec:wieferich}
Let $E/\Q$ be an elliptic curve in Weierstra\ss{} form and let $P\in E(\Q)$ have infinite order.
Note that the only place in Silverman's proof of Theorem~\ref{thm:silverman} where the
assumption $j(E)\in\{0,\,1728\}$ is invoked is in the proof of
\cite[Lemma~13]{silverman:wieferich}.
For $Q\in E(\Q)\setminus\{O\}$ we write $d_{Q}=d'_{Q}\cdot v_{Q}$, where $d'_Q$ is as in the proof of
Proposition~\ref{prop:sqfree}).

In order to deduce the statement of \cite[Lemma~13]{silverman:wieferich}, it suffices to show that for all
$\epsilon>0$ there exists a constant $c'_{\epsilon}$ such that
\[
\log v_{nP}\le\epsilon \log(d_{nP})+c'_{\epsilon}
\]
for all $n\ge 1$.
If we assume Conjecture~\ref{conj:denom}, then we can in fact prove a stronger result:
\begin{lemma}
Assume Conjecture~\ref{conj:denom}.
Then for all $\epsilon>0$ there exists a constant $c'_{\epsilon}$ such that
\[
\log(v_Q)\le \epsilon\log\rad(d_Q)+c'_\epsilon.
\]
for all $Q\in E(\Q)\setminus\{O\}$.
\end{lemma}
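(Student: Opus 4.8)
The plan is to follow the rearrangement of Conjecture~\ref{conj:denom} carried out in the proof of Proposition~\ref{prop:sqfree}, but to apply it with a small auxiliary parameter $\delta>0$ whose precise value is fixed only at the very end in terms of $\epsilon$. Write
\[
d_Q=d'_Q\cdot\prod_{i=1}^n p_i^{t_i},
\]
with $d'_Q$ squarefree, the $p_i$ distinct primes and $t_i\ge 2$, so that $v_Q=\prod_{i=1}^n p_i^{t_i}$, $\rad(v_Q)=\prod_{i=1}^n p_i$ and $\rad(d_Q)=d'_Q\cdot\rad(v_Q)$. Applying Conjecture~\ref{conj:denom} with parameter $\delta$ and rearranging exactly as in Section~\ref{sec:prop} gives
\[
\sum_{i=1}^n (t_i-1-\delta)\log p_i\le\delta\log d'_Q+c_\delta\le\delta\log\rad(d_Q)+c_\delta.
\]

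From this single inequality I would extract two facts. First, since $t_i\ge 2$ we have $t_i-1-\delta\ge 1-\delta$, so the left-hand side is at least $(1-\delta)\log\rad(v_Q)$, whence
\[
\log\rad(v_Q)\le\frac{\delta}{1-\delta}\log\rad(d_Q)+\frac{c_\delta}{1-\delta}.
\]
Second, adding $\sum_{i=1}^n\log p_i=\log\rad(v_Q)$ to both sides of the displayed inequality and using $\log d'_Q+\log\rad(v_Q)=\log\rad(d_Q)$ once more gives $\sum_{i=1}^n(t_i-1)\log p_i\le\delta\log\rad(d_Q)+c_\delta$. Combining the two, and writing $\log v_Q=\sum_{i=1}^n t_i\log p_i=\sum_{i=1}^n(t_i-1)\log p_i+\log\rad(v_Q)$, yields
\[
\log v_Q\le\frac{\delta(2-\delta)}{1-\delta}\log\rad(d_Q)+\frac{(2-\delta)c_\delta}{1-\delta}.
\]

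To finish, given $\epsilon>0$ I would choose $\delta>0$ small enough that $\delta(2-\delta)/(1-\delta)\le\epsilon$, which is possible because this quantity tends to $0$ as $\delta\to 0$, and then set $c'_\epsilon:=(2-\delta)c_\delta/(1-\delta)$; the case $v_Q=1$ is trivial since then the left-hand side vanishes while $\log\rad(d_Q)\ge 0$. I do not expect a serious obstacle: the only subtle point is that substituting $v_Q$ directly into Conjecture~\ref{conj:denom} would leave an uncontrolled term $\log\rad(v_Q)$, and the resolution is precisely the first of the two facts above, where the surplus $t_i-1-\delta\ge 1-\delta$ forced by $t_i\ge 2$ makes $\rad(v_Q)$ negligible compared with $\rad(d_Q)$.
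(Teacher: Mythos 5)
Your proof is correct and takes essentially the same route as the paper's: both split $\log v_Q=\log\bigl(v_Q/\rad(v_Q)\bigr)+\log\rad(v_Q)$ (your $\sum(t_i-1)\log p_i$ is exactly $\log\bigl(v_Q/\rad(v_Q)\bigr)$), bound each summand by a small multiple of $\log\rad(d_Q)$ plus a constant using Conjecture~\ref{conj:denom} together with $t_i\ge 2$, and then tune the auxiliary parameter at the end. One small slip in the write-up: to pass from $\sum(t_i-1-\delta)\log p_i\le\delta\log d'_Q+c_\delta$ to $\sum(t_i-1)\log p_i\le\delta\log\rad(d_Q)+c_\delta$ you should add $\delta\sum\log p_i=\delta\log\rad(v_Q)$ to both sides, not $\sum\log p_i$; with that fix the stated inequality and the rest of the argument go through.
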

\begin{proof}
Let $\epsilon>0$, let $\epsilon'=2\epsilon$ and let $c_{\epsilon'}$ be the constant from Conjecture~\ref{conj:denom}.
Since $d'_Q=\rad(d'_Q)$, Conjecture~\ref{conj:denom} predicts
\begin{equation}\label{eq:vq}
 \log \left(\frac{v_Q}{\rad(v_Q)}\right)\le \epsilon'\log\rad(d_Q)+c_{\epsilon'}
\end{equation}
for any $Q\in E(\Q)\setminus\{O\}$.
But since, by construction, $v_Q$ is not exactly divisible by any prime number, we also get
\[
\log\rad(v_Q)\le \log \left(\frac{v_Q}{\rad(v_Q)}\right).
\]
The latter is at most $\epsilon'\log\rad(d_Q)+c_{\epsilon'}$ by~\eqref{eq:vq}.
Rewriting~\eqref{eq:vq}, we conclude
\[
 \log {v_Q}\le
2\epsilon'\log\rad(d_Q)+2c_{\epsilon'}=\epsilon\log\rad(d_Q)+2c_{\epsilon'}.
\]
\end{proof}

\bibliography{mochizuki-biblio}

\begin{thebibliography}{EEW01}

\bibitem[EEW01]{einsiedler-everest-ward:primes}
Manfred Einsiedler, Graham Everest, and Thomas Ward.
\newblock Primes in elliptic divisibility sequences.
\newblock {\em LMS J. Comput. Math.}, 4:1--13 (electronic), 2001.

\bibitem[ERS07]{everest-reynolds-stevens}
Graham Everest, Jonathan Reynolds, and Shaun Stevens.
\newblock On the denominators of rational points on elliptic curves.
\newblock {\em Bull. Lond. Math. Soc.}, 39(5):762--770, 2007.

\bibitem[Moc10]{mochizuki:elliptic_general}
Shinichi Mochizuki.
\newblock Arithmetic elliptic curves in general position.
\newblock {\em Math. J. Okayama Univ.}, 52:1--28, 2010.

\bibitem[Moc12]{mochizuki:iut4}
Shinichi Mochizuki.
\newblock Inter-universal {T}eichm\"uller theory {IV}: {L}og-volume
  computations and set-theoretic foundations.
\newblock {\em Preprint}, 2012.
\newblock
  \url{http://www.kurims.kyoto-u.ac.jp/~motizuki/Inter-universal%20Teichmulle%
r%20Theory%20IV.pdf}.

\bibitem[Rey12a]{reynolds:personal}
Jonathan Reynolds.
\newblock Personal communication, 2012.

\bibitem[Rey12b]{reynolds:perfect}
Jonathan Reynolds.
\newblock Perfect powers in elliptic divisibility sequences.
\newblock {\em J. Number Theory}, 132(5):998--1015, 2012.

\bibitem[Sil86]{silverman:elliptic1}
Joseph~H. Silverman.
\newblock {\em The arithmetic of elliptic curves}, volume 106 of {\em Graduate
  Texts in Mathematics}.
\newblock Springer-Verlag, New York, 1986.

\bibitem[Sil88]{silverman:wieferich}
Joseph~H. Silverman.
\newblock Wieferich's criterion and the {$abc$}-conjecture.
\newblock {\em J. Number Theory}, 30(2):226--237, 1988.

\bibitem[Sou97]{soule:hermitian}
Christophe Soul{\'e}.
\newblock Hermitian vector bundles on arithmetic varieties.
\newblock In {\em Algebraic geometry---{S}anta {C}ruz 1995}, volume~62 of {\em
  Proc. Sympos. Pure Math.}, pages 383--419. Amer. Math. Soc., Providence, RI,
  1997.

\bibitem[Vol00]{voloch:elliptic}
Jos{\'e}~Felipe Voloch.
\newblock Elliptic {W}ieferich primes.
\newblock {\em J. Number Theory}, 81(2):205--209, 2000.

\bibitem[War48]{ward}
Morgan Ward.
\newblock Memoir on elliptic divisibility sequences.
\newblock {\em Amer. J. Math.}, 70:31--74, 1948.

\end{thebibliography}
\bibliographystyle{alpha}

\end{document}